\documentclass[11pt,a4paper,reqno]{amsart}

\usepackage{amssymb,amsmath}
\usepackage{amsthm}
\usepackage{multicol}
\usepackage{latexsym}
\usepackage{longtable}
\usepackage[colorlinks=true,linktocpage=true,pagebackref=false, citecolor=black,linkcolor=black]{hyperref}
\usepackage{graphics}
\usepackage{euscript}
\usepackage{mathrsfs}
\usepackage[dvips]{curves}
\usepackage{tikz}
\usepackage{pgfplots}
\usetikzlibrary{arrows,decorations.pathmorphing,backgrounds,positioning,fit,matrix}
\usepackage{comment}
\usepackage{multirow}
\usepackage{xcolor}
\usepackage{caption,subcaption}
\usepackage{capt-of}


\pgfplotsset{every axis/.append style={
                    axis x line=center,    
                    axis y line=center,    
                    axis line style={->}, 
                    ytick={1,2,3,4},
                    xtick={1,2,3,4},                 
                    }}

\tikzset{>=stealth}


\voffset=0cm
\hoffset=-1cm
\setlength{\textwidth}{15.5cm}
\setlength{\textheight}{21cm}
\raggedbottom

\parskip=1.2ex


\def\sqbullet{\raise.2ex\hbox{\vrule width 3.5pt height 3.5pt}}

{\em}

\newcounter{substep}
\def\thesubstep{\arabic{substep}}

{\em}

\newcounter{subsubstep}
\def\thesubsubstep{\arabic{subsubstep}}

{\em}



\newtheorem{thm}{Theorem}[section]

\newtheorem{lem}[thm]{Lemma}

\theoremstyle{definition}

\theoremstyle{remark}

\newtheorem{quest}[thm]{Question}

\numberwithin{equation}{section}

\setcounter{secnumdepth}{4}


 \newcommand{\R}{{\mathbb R}}
 \newcommand{\C}{{\mathbb C}}


\newcommand{\Ss}{{\EuScript S}}

\newcommand{\Qq}{{\EuScript Q}}

\newcommand{\Aa}{{\EuScript A}}

\newcommand{\Bb}{{\EuScript B}}


\newcommand{\tildebaja}{{\raise.17ex\hbox{$\scriptstyle\sim$}}}



\newcommand{\x}{{\tt x}} \newcommand{\y}{{\tt y}}



\numberwithin{equation}{section}

\begin{document}
\title{A short proof for the open quadrant problem}
\author{Jos\'e F. Fernando}
\address{Departamento de \'Algebra, Facultad de Ciencias Matem\'aticas, Universidad Complutense de Madrid, 28040 MADRID (SPAIN)}
\curraddr{Dipartimento di Matematica, Universit\`a degli Studi di Pisa, Largo Bruno Pontecorvo, 5, 56127 PISA (ITALY)}
\email{josefer@mat.ucm.es}

\author{Carlos Ueno}
\address{Dipartimento di Matematica, Universit\`a degli Studi di Pisa, Largo Bruno Pontecorvo, 5, 56127 PISA (ITALY)}
\email{jcueno@mail.dm.unipi.it}

\thanks{The first author is supported by Spanish GR MTM2011-22435, while the second is a external collaborator of this project. This article has been written during a common one year research stay of the authors at the Dipartimento di Matematica of the Universit\`a di Pisa. The authors would like to thank the department for  the very pleasant working conditions.}

\date{17/11/2014}
\subjclass[2010]{Primary: 14P10, 26C99; Secondary: 52A10.}
\keywords{Polynomial maps and images, semialgebraic sets, open quadrant.}

\begin{abstract}
In 2003 it was proved that the open quadrant $\Qq:=\{x>0,y>0\}$ of $\R^2$ is a polynomial image of $\R^2$. This result was the origin of an ulterior more systematic study of polynomial images of Euclidean spaces. In this article we provide a short proof of the previous fact that does not involve computer calculations, in contrast with the original one. The strategy here is to represent the open quadrant as the image of a polynomial map that can be expressed as the composition of three simple polynomial maps whose images can be easily understood.
\end{abstract}
\maketitle

\section{Introduction}
In the 1990 Reelle Algebraische Geometrie Seminar held in Oberwolfach Gamboa \cite{g} proposed the following problem: 
\begin{center}
\em Characterize geometrically the images of polynomial maps between Euclidean spaces. 
\end{center}
The effective representation of a subset $\Ss\subset\R^m$ as a polynomial or regular image of $\R^n$ reduces the study of certain classical problems in Real Geometry to its study in $\R^n$ with the advantage of avoiding contour conditions. Examples of these problems are Optimization or Positivstellens\"atze certificates \cite{fg2,fu2}. 

When facing the problem above, the fact of working over the field of real numbers introduces extra difficulties that are not present when working over the field of complex numbers. As a simple example, it is a basic result in the theory of one complex variable that the image of a non-constant polynomial map $f:\C\to\C$ is always equal to $\C$. However, the equivalent statement in the real setting no longer holds. The reader can easily verify that: {\it The image of a real, non-constant polynomial function is an unbounded closed interval.}

If we broaden our interest to polynomial maps $f:\R^n\to\R^m$ between Euclidean spaces the characterization of their images becomes a tougher task. By Tarski-Seidenberg's principle \cite[1.4]{bcr} the image of an either polynomial or regular map is a semialgebraic set. A subset $\Ss\subset\R^n$ is \em semialgebraic \em when it has a description by a finite boolean combination of polynomial equations and inequalities. During the last decade we have approached the problem of characterizing which (semialgebraic) subsets $\Ss\subset\R^m$ are polynomial or regular images of $\R^n$. On the one hand, we have obtained some necessary conditions that a semialgebraic set must satisfy in order to be a polynomial or regular image of $\R^n$ (see \cite{f1,fg1,fg2,fu1}). On the other hand, we have described how to obtain constructively notable families of semialgebraic sets as images of polynomial or regular maps. In particular, we have focused our attention in convex polyhedra, their interiors and their complementaries \cite{fgu1,fu2,fu3,u2}.

Even in low dimensions we have to deal with situations that at first sight look harmless, but when considered more carefully become unexpectedly hard to handle because of the lack of precise tools to determine the image of a polynomial map. A particular case is the positive answer to the famous `quadrant problem':
\begin{thm}\label{quad}
The open quadrant $\Qq:=\{x>0,y>0\}$ of $\R^2$ is a polynomial image of $\R^2$.
\end{thm}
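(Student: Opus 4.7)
My approach is to display a polynomial map $\Phi\colon\R^2\to\R^2$ with $\Phi(\R^2)=\Qq$ by writing $\Phi=f_3\circ f_2\circ f_1$ as a composition of three elementary polynomial self-maps whose images can be controlled one step at a time. Setting $\Ss_0:=\R^2$ and $\Ss_i:=f_i(\Ss_{i-1})$, the proof reduces to verifying three equalities of semialgebraic sets $\Ss_1,\Ss_2,\Ss_3$, each much easier to picture than the final claim, with $\Ss_3=\Qq$.

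A fundamental building block should be an auxiliary polynomial whose image on $\R^2$ is the open half-line $(0,\infty)$. A natural candidate is $p(x,y):=y^2+(1-xy)^2$, which is strictly positive on $\R^2$ (it would vanish only if $y=0$ and $xy=1$ simultaneously, which is impossible) yet achieves every value in $(0,\infty)$: given $\veps>0$, take $y=\sqrt{\veps/2}$ and $x$ near $1/y$ to make both summands as small as desired. This polynomial, or small variants of it, provides a mechanism to turn ``closed-type'' conditions into strict positivity, which is precisely the mechanism needed to reach an \emph{open} semialgebraic set as the final image.

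The plan is then to pick $f_1$ so that $\Ss_1$ is a familiar closed region obtainable by elementary polynomial maps (for instance, a closed half-plane or the region above a parabola, of the kind produced by maps like $(x,y)\mapsto(x,y^2)$ or $(x,y)\mapsto(x,x^2+y^2)$); to pick $f_2$ so that $\Ss_2$ is one step closer to $\Qq$, for example an ``open-in-one-coordinate'' region produced by composing with a shear or a squaring and absorbing a $p$-like factor; and finally to pick $f_3$ involving $p$ (or a rescaled copy) so that the remaining closed condition becomes the strict positivity needed to land on $\Qq$. At each stage we verify both inclusions $f_i(\Ss_{i-1})\subseteq\Ss_i$ and $\Ss_i\subseteq f_i(\Ss_{i-1})$ separately.

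The main obstacle will be the surjectivity direction $\Qq\subseteq\Phi(\R^2)$. Given $(a,b)\in\Qq$, we must lift through the chain $f_3,f_2,f_1$ and exhibit an explicit preimage. At each lifting step, fixing one coordinate typically reduces the problem to a polynomial equation of low degree (often a quadratic) in the other, whose discriminant must be shown nonnegative throughout the target region; the advantage of the three-step decomposition is that these discriminant checks are carried out on simple, transparent sets rather than directly on $\Qq$. The reverse inclusion $\Phi(\R^2)\subseteq\Qq$ is easier, reducing to verifying that each coordinate of $\Phi$ is a polynomial expression guaranteed to be strictly positive on $\R^2$, which is where the positivity of $p$ is decisive. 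The gain over the original computer-assisted proof is precisely that the nontrivial surjectivity and positivity arguments are broken into three routine pieces, each associated to a simple semialgebraic set that can be drawn on a sheet of paper.
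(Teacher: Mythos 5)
Your overall strategy --- a three-step composition of polynomial self-maps, together with the key observation that polynomials like $p(x,y)=y^2+(1-xy)^2$ are strictly positive on $\R^2$ yet have image all of $(0,\infty)$ --- is in fact the same approach the paper takes; the two coordinates of the paper's first map $F(x,y)=((xy-1)^2+x^2,(xy-1)^2+y^2)$ are precisely two copies of your $p$. However, what you have written is a blueprint, not a proof: you never exhibit the maps $f_1,f_2,f_3$, never pin down the intermediate sets $\Ss_1,\Ss_2$, and never carry out the lifting and discriminant arguments you yourself identify as ``the main obstacle.'' As it stands nothing is verified, and the theorem remains unproved. Moreover, your framing of the intermediate steps as exact \emph{equalities} $\Ss_i=f_i(\Ss_{i-1})$ is more rigid than necessary and would make the bookkeeping harder than it needs to be, since determining the exact image of even a simple polynomial map is usually awkward.

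The paper sidesteps both difficulties. First, it never computes any image exactly; it proves only sandwich inclusions $\Aa\subset F(\R^2)\subset\Qq$, $\Bb\subset G(\Aa)\subset\Qq$, and $\Qq\subset H(\Bb)\subset\Qq$, where $\Aa:=\{xy-1\ge0\}\cap\Qq$ and $\Bb:=\Aa\cup\{y\ge x>0\}$; this is strictly easier than computing $\Ss_1,\Ss_2$ and still yields $\Qq=(H\circ G\circ F)(\R^2)$ by chaining the inclusions. Second, the ordering is reversed relative to your plan: the paper applies the positivity trick \emph{first}, so that after step one the image is already inside the open set $\Qq$ while still containing the large region $\Aa$. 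The two remaining maps $G$ and $H$ never need to ``open up'' anything; each fixes one coordinate and composes the other with an odd-degree cubic ($\phi_x$ or $\psi_y$) chosen so that, fiber by fiber, a half-line is mapped onto the required half-line. Your proposal to start from a closed region such as a half-plane or a parabola and postpone all strict positivity to the last map raises the delicate question of where the boundary of each closed intermediate set ends up under the later maps --- exactly the complication that the paper's ``positivity first, then spread within $\Qq$'' ordering is designed to avoid. To turn your outline into a proof, adopt those two adjustments and supply explicit formulas for the three maps, then carry out the inclusion checks.
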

This problem was stated in \cite{g} and solved in \cite{fg1}. The proof proposed in \cite{fg1} makes use of Sturm's algorithm applied to a high degree polynomial and the complexity of the involved calculations required computer assistance. This fact makes the reading of the proof rather disappointing, for it becomes a tedious task to verify that all the performed computations are indeed correct.  

We have always wondered whether a less technical and less demanding approach was possible. In this work we present a very short and elementary proof for the quadrant problem, which completely avoids the use of computers. Our approach is different to the one chosen in \cite{fg1}. Our strategy here is to provide a map $f:\R^2\to\R^2$ that can be expressed as the composition of three simple polynomial maps whose images are easily estimated and has the open quadrant as image.  To be more precise, we will show that $\Qq$ is the image of the polynomial map $f:=H\circ G\circ F:\R^2\to\R^2$ where
\begin{align}\label{maps}
&F:\R^2\to\R^2,\ (x,y)\mapsto((xy-1)^2+x^2,(xy-1)^2+y^2),\nonumber\\
&G:\R^2\to\R^2,\ (x,y)\mapsto(x,y(xy-2)^2+x(xy-1)^2),\\
&H:\R^2\to\R^2,\ (x,y)\mapsto(x(xy-2)^2+\tfrac{1}{2}xy^2,y)\nonumber.
\end{align}

Apart from providing a shorter and more comprehensible proof of the open quadrant problem, we have other reasons to revisit the issue. One is related to its  importance: \em The representation of the open quadrant as a polynomial image is a key step in order to construct polynomial or regular images of higher complexity\em, as is the case for the family of convex polyhedra that we mentioned before. Another one is related to the still pending question of finding an optimal polynomial map that achieves the goal. In other words:
\begin{center}
\em Which is the simplest polynomial map $f:\R^2\to\R^2$ whose image is the open quadrant?\em 
\end{center}
Here the term `simplest' is rather vague and ambiguous. In Section~\ref{sec:effect} we try to be more specific and provide questions that at present we are unable to answer.

\section{The new proof}
In order to prove Theorem \ref{quad} we need some preliminary work. As we have already announced, $\Qq$ is the image of a composition of three simple polynomial maps. We present next three auxiliary lemmas that show some properties of the images of the polynomial maps $F,G,H$ introduced in \eqref{maps}.

\begin{lem}\label{step1}
Let $\Aa:=\{xy-1\ge0\}\cap\Qq$. Then the image of
$$
F:=(F_1,F_2):\R^2\to\R^2,\ (x,y)\mapsto((xy-1)^2+x^2,(xy-1)^2+y^2)
$$
satisfies $\Aa\subset F(\R^2)\subset\Qq$.
\end{lem}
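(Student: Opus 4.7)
The plan is to treat the two inclusions separately. The inclusion $F(\R^2) \subset \Qq$ is nearly immediate: both components $F_1(x,y) = (xy-1)^2 + x^2$ and $F_2(x,y) = (xy-1)^2 + y^2$ are sums of squares, and $F_1(x,y) = 0$ would force simultaneously $x=0$ and $xy=1$, which is incompatible. Hence $F_1 > 0$ on $\R^2$, and by symmetry $F_2 > 0$, giving $F(\R^2) \subset \Qq$.

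For the inclusion $\Aa \subset F(\R^2)$, my idea is to reduce the two-variable problem $F(x,y) = (a,b)$ to a one-dimensional continuity argument. The trigger for this reduction is the identity
$$
F_1(x,y) - x^2 \; = \; F_2(x,y) - y^2 \; = \; (xy-1)^2,
$$
which suggests parametrizing solutions by $t := (xy-1)^2 \ge 0$. Given $(a,b) \in \Aa$, I look for a preimage of the form $x = \sqrt{a-t}$, $y = \sqrt{b-t}$ with $t \in [0,\min(a,b)]$; such a pair automatically satisfies $F_1(x,y) = a$ and $F_2(x,y) = b$, provided the compatibility condition $(xy-1)^2 = t$ holds. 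Since the nonnegative choice of square roots gives $xy = \sqrt{(a-t)(b-t)}$, and choosing the branch $xy = 1 + \sqrt{t}$ of the equation $xy = 1 \pm \sqrt{t}$, the compatibility reduces to the single equation
$$
h(t) \; := \; \sqrt{(a-t)(b-t)} - 1 - \sqrt{t} \; = \; 0.
$$

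I then finish via the intermediate value theorem applied to the continuous function $h$ on the compact interval $[0,\min(a,b)]$. The endpoint values are $h(0) = \sqrt{ab} - 1 \geq 0$ (this is exactly where the hypothesis $ab \geq 1$ is used) and $h(\min(a,b)) = -1 - \sqrt{\min(a,b)} < 0$. Continuity therefore produces a root $t_0 \in [0, \min(a,b))$, and the pair $(x_0,y_0) := (\sqrt{a-t_0},\sqrt{b-t_0})$ has $x_0,y_0 > 0$ with $x_0 y_0 = 1+\sqrt{t_0}$, so that $(x_0y_0-1)^2 = t_0$ and therefore $F(x_0,y_0) = (a,b)$ by construction.

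The only real obstacle is conceptual rather than technical: recognising that $t=(xy-1)^2$ is the right parameter collapses a messy two-variable algebraic system into a transparent one-dimensional continuity problem whose endpoint estimates are trivial, with $ab \ge 1$ used at exactly one place.
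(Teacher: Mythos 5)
Your proof is correct, and it rests on the same underlying idea as the paper — collapse the two-variable system to a one-parameter sign-change argument and invoke the intermediate value theorem — but the reduction itself is carried out differently. The paper substitutes $z := xy-1$, writes $y = (z+1)/x$, eliminates $x$, and arrives at the monic quartic $P(z) = z^4 - (a+b+1)z^2 - 2z + (ab-1)$, which it observes changes sign between $P(0) = ab-1 \ge 0$ and $P(\sqrt{a}) = -a - 2\sqrt{a} - 1 < 0$; from the root $z_0 \in [0,\sqrt{a})$ it recovers $x_0 = \sqrt{a - z_0^2}$, $y_0 = (z_0+1)/x_0$. You instead parametrize by $t := (xy-1)^2 = z^2$, posit the preimage $(\sqrt{a-t},\sqrt{b-t})$ directly, and reduce to the non-polynomial compatibility equation $h(t) = \sqrt{(a-t)(b-t)} - 1 - \sqrt{t} = 0$ on $[0,\min(a,b)]$, with sign change from $h(0)=\sqrt{ab}-1 \ge 0$ to $h(\min(a,b)) = -1-\sqrt{\min(a,b)} < 0$. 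The two constructions in fact produce the very same point: your $t_0$ is the paper's $z_0^2$, and your choice of branch $xy = 1+\sqrt{t}$ mirrors the paper's restriction to $z_0 \ge 0$. The paper's formulation has the aesthetic advantage of staying within polynomial algebra (a monic quartic with explicit sign evaluations, which fits the paper's emphasis on polynomial maps); yours buys a more transparent, constructive description of the preimage at the cost of working with a transcendental function involving square roots. Both proofs are correct, self-contained, and of comparable length.
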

\begin{proof}
It is clear that $F_1,F_2$ are strictly positive on $\R^2$. Consequently, $F(\R^2)\subset\Qq$. To prove the first inclusion we show that if $a>0$, $b>0$ satisfy $ab-1\ge 0$, then the system of equations
\begin{equation}\label{F}
\begin{cases}
(xy-1)^2+x^2=a,\\
(xy-1)^2+y^2=b
\end{cases}
\end{equation}
has a solution $(x,y)\in\R^2$. Set $z:=xy-1$ and rewrite the system \eqref{F} in terms of the variables $\{x,z\}$. We have $y=\frac{z+1}{x}$ and \eqref{F} becomes
$$
\begin{cases}
z^2+x^2=a,\\
z^2+\frac{(z+1)^2}{x^2}=b.
\end{cases}
$$
We eliminate $x$ and deduce that $z$ must satisfy the polynomial equation
$$
P(z):=z^4-(a+b+1)z^2-2z+(ab-1)=0.
$$
Observe that $P$ is a monic polynomial of even degree such that 
$$
P(0)=ab-1\ge0\quad\text{and}\quad P(\sqrt{a})=-2\sqrt{a}-a-1<0.
$$ 
Thus, $P$ \em has a real root $z_0$ such that $0\le z_0<\sqrt{a}$\em. Set $x_0:=\sqrt{a-z_0^2}$ and $y_0:=\frac{z_0+1}{x_0}$. We have $F(x_0,y_0)=(a,b)$, so $\Aa\subset F(\R^2)$, as required.
\end{proof}

\begin{center}
\begin{figure}[t]
\begin{minipage}{0.49 \textwidth} 
\begin{center}
\begin{tikzpicture}[scale=1]
    \begin{axis}[
            xmin=-1,xmax=5,
        ymin=-1,ymax=5]
        \addplot [smooth,thick,domain=0.2:5,fill=gray!40] ({x}, {1/x});
        \addplot [thick,color=gray!40,fill=gray!40, 
                    fill opacity=1]coordinates {
            (5, 5) 
            (5, 0.2)
            (0.2, 5)  };
    \end{axis}
    \path (4.1,4.2) node {$\Aa$};
\end{tikzpicture}
\end{center}
\end{minipage}
\hfill 
\begin{minipage}{0.49 \textwidth} 
\begin{center}
\begin{tikzpicture}[scale=1]
    \begin{axis}[
            xmin=-1,xmax=5,
        ymin=-1,ymax=5]
        \addplot [smooth,thick,domain=0.2:5,fill=gray!40] ({x}, {1/x});
        \addplot [thick,color=gray!40,fill=gray!40, 
                    fill opacity=1]coordinates {
            (5, 5) 
            (5, 0.2)
            (0.2, 5)  };
             \addplot [thick,color=gray!40,fill=gray!40, 
                    fill opacity=1]coordinates {
            (5, 5) 
            (0.05, 0.05)
            (0.05, 5)  };
            
        \addplot[thick, domain=0.1:1] expression {x};
       \addplot[thin, dashed,domain=1:5] expression {x};
       \addplot[thin, dashed, domain=0.1:1] expression {1/x};
        \addplot [thick,dashed,            ]coordinates{
            (0.05, 5)
            (0.05, 0.1)  };
    \end{axis}
    \path (4.1,4.2) node {$\Bb$};
\end{tikzpicture}
\end{center}
\end{minipage}
\caption{The sets $\Aa:=\{xy-1\ge0\}\cap\Qq$ and $\Bb:=\Aa\cup\{y\geq x>0\}$}
\end{figure}
\end{center}

\begin{lem}\label{step2}
Let $\Bb:=\Aa\cup\{y\geq x>0\}$. Then the image of 
$$
G:=(G_1,G_2):\R^2\to\R^2,\ (x,y)\mapsto(x,y(xy-2)^2+x(xy-1)^2)
$$ 
satisfies $\Bb\subset G(\Aa)\subset G(\Qq)\subset\Qq$. 
\end{lem}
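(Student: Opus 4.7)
The plan is to handle the three inclusions from right to left, since only the leftmost one $\Bb\subset G(\Aa)$ carries any real content. First I would dispatch $G(\Qq)\subset\Qq$: on $\Qq$ the first coordinate $G_1=x$ is positive, and $G_2=y(xy-2)^2+x(xy-1)^2$ is a sum of two non-negative summands whose common zero set is empty in $\Qq$ (as $xy=1$ and $xy=2$ cannot hold simultaneously). The middle inclusion $G(\Aa)\subset G(\Qq)$ is immediate from $\Aa\subset\Qq$.

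For the main inclusion $\Bb\subset G(\Aa)$, I would fix a target $(a,b)\in\Bb$ and produce a preimage $(x,y)\in\Aa$. Since $G_1(x,y)=x$, we are forced to take $x=a$, which reduces the problem to finding $y\ge 1/a$ (the condition guaranteeing $(a,y)\in\Aa$) satisfying
$$
h(y):=y(ay-2)^2+a(ay-1)^2=b.
$$
The key computational step, and the only place where the specific form of $G_2$ is exploited, will be to evaluate $h$ at the two distinguished points $1/a$ and $2/a$, obtaining the clean identities $h(1/a)=1/a$ and $h(2/a)=a$. Together with the obvious $h(y)\to\infty$ as $y\to\infty$, these two values will do essentially all the work.

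I would then apply the intermediate value theorem to the continuous function $h$ on the intervals $[1/a,2/a]$ and $[2/a,\infty)$, to conclude
$$
h([1/a,\infty))\supseteq\bigl[\min(a,1/a),\max(a,1/a)\bigr]\cup[a,\infty)=\bigl[\min(a,1/a),\infty\bigr).
$$
It remains to observe that any $(a,b)\in\Bb$ satisfies $b\ge\min(a,1/a)$: if $(a,b)\in\Aa$ then $ab\ge 1$ forces $b\ge 1/a$, while if $(a,b)\in\{y\ge x>0\}$ we have $b\ge a$ directly. Either way there exists $y\ge 1/a$ with $h(y)=b$, so $(a,y)\in\Aa$ and $G(a,y)=(a,b)$, closing the argument.

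The main obstacle, or rather the crux of the lemma, is spotting the two identities $h(1/a)=1/a$ and $h(2/a)=a$ and noticing that the two pieces used to build $\Bb$ (the hyperbolic region $\Aa$ and the wedge $\{y\ge x>0\}$) correspond precisely to the two alternative lower bounds $b\ge 1/a$ and $b\ge a$ that the intermediate value theorem can handle from $h(1/a)$ and $h(2/a)$ respectively. Once these are in place the rest of the argument is entirely routine, which is what makes this proof substantially shorter than the one in \cite{fg1}.
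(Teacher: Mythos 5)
Your proof is correct and follows essentially the same route as the paper's: fix $x=a$, exploit the identities $h(1/a)=1/a$ and $h(2/a)=a$ together with $h(y)\to\infty$, and apply the intermediate value theorem to cover $[\min(a,1/a),\infty)$. The paper merely compresses the IVT step into a single ``Consequently,'' while you spell it out, but the key identities and the decomposition of $\Bb$ by vertical fibers are identical.
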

\begin{proof}
The inclusion $G(\Aa)\subset G(\Qq)$ is obvious. Observe that $G_1$ and $G_2$ are strictly positive on $\Qq$. Consequently, $G(\Qq)\subset\Qq$.

Next, we prove the inclusion $\Bb\subset G(\Aa)$. Notice first that we can express $\Bb$ as follows:
$$
\Bb=\bigsqcup_{x>0}(\{x\}\times[y_x,+\infty[):=\bigsqcup_{x>0}(\{x\}\times\Bb_x),
$$
where $y_x:=\min\{x,1/x\}$. For each $x>0$ consider the polynomial function in the variable $\y$
$$
\phi_x(\y):=\y(x\y-2)^2+x(x\y-1)^2=x^2\y^3+(x^3-4x)\y^2+(4-2x^2)\y+x.
$$
These polynomials have odd degree and positive leading coefficient because $x>0$. Observe also that $\phi_x(\frac{1}{x})=\frac{1}{x}$ and $\phi_x(\frac{2}{x})=x$. Consequently 
$$
\Bb_x={[y_x,+\infty[}\subset\phi_x([1/x,+\infty[).
$$ 
Therefore
$$
\Bb=\bigsqcup_{x>0}(\{x\}\times\Bb_x)\subset\bigsqcup_{x>0}\{x\}\times\phi_x({[1/x,+\infty[})=\bigsqcup_{x>0}G(\{x\}\times{[1/x,+\infty[})=G(\Aa),
$$
as required.
\end{proof}

\begin{lem}\label{step3}
The polynomial map
$$
H:=(H_1,H_2):\R^2\to\R^2,\ (x,y)\mapsto(x(xy-2)^2+\tfrac{1}{2}xy^2,y)
$$
satisfies $H(\Bb)=H(\Qq)=\Qq$.
\end{lem}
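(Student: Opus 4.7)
The plan is to exploit the fact that $H(x, y) = (H_1(x, y), y)$ preserves the second coordinate, which reduces the problem to a one-variable problem on each horizontal slice $\{y = b\}$. First I would dispose of the easy inclusions $H(\Bb) \subseteq H(\Qq) \subseteq \Qq$ by writing
$$H_1(x, y) = x\bigl[(xy - 2)^2 + \tfrac{1}{2}y^2\bigr],$$
which is strictly positive on $\Qq$, and using $\Bb \subseteq \Qq$.

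The heart of the proof is the reverse inclusion $\Qq \subseteq H(\Bb)$. Fix $(a, b) \in \Qq$ and consider the one-variable polynomial
$$\phi_b(x) := H_1(x, b) = x(bx - 2)^2 + \tfrac{b^2}{2}x.$$
Since $H_2(x, b) = b$, it suffices to find $x$ in the horizontal slice of $\Bb$ at height $b$, which equals $(0, b] \cup [1/b, +\infty)$, satisfying $\phi_b(x) = a$. This slice is all of $(0, +\infty)$ when $b \geq 1$, and has a forbidden gap $(b, 1/b)$ when $0 < b < 1$. In the first case, $\phi_b$ is continuous and positive on $(0, +\infty)$, with $\phi_b(x) \to 0$ as $x \to 0^+$ and $\phi_b(x) \to +\infty$ as $x \to +\infty$, so its image equals $(0, +\infty)$ and contains $a$.

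For $0 < b < 1$ I would evaluate $\phi_b$ at the two strategically chosen points $x = b$ and $x = 2/b$. Direct substitution yields $\phi_b(2/b) = b$ (the squared factor $(bx-2)^2$ vanishes there) and $\phi_b(b) = b\,[(b^2 - 2)^2 + b^2/2]$, and I would establish the key inequality $\phi_b(b) \geq b$ via the factorisation
$$2\bigl(\phi_b(b)/b - 1\bigr) = 2b^4 - 7b^2 + 6 = (b^2 - 2)(2b^2 - 3),$$
whose two factors are both negative for $0 < b < 1$. A short connectedness argument then finishes the job: $\phi_b((0, b])$ is an interval of the form $(0, M]$ with $M \geq \phi_b(b)$, so it contains $(0, \phi_b(b)]$; and $\phi_b([1/b, +\infty))$ is an interval of the form $[m, +\infty)$ with $m \leq \phi_b(2/b) = b$, so it contains $[b, +\infty)$. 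Since $\phi_b(b) \geq b$, the union of these two images equals $(0, +\infty)$ and contains $a$.

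The main obstacle is locating the right pair of test points. A generic evaluation of $\phi_b$ would force an unpleasant case analysis of the critical points of a cubic whose coefficients depend on $b$; the value $x = 2/b$ is natural because $(bx - 2)^2$ vanishes there, yielding the clean identity $\phi_b(2/b) = b$, and pairing it with $x = b$ reduces the required overlap condition to the elementary polynomial inequality displayed above.
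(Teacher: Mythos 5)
Your proposal is correct and follows essentially the same route as the paper's proof: reduce to horizontal slices, decompose $\Bb$ into $\Bb_y \times \{y\}$, evaluate the cubic at the same two test points $x=b$ and $x=2/b$, and close by connectedness. The only cosmetic difference is in the verification of $\phi_b(b) > b$ for $0<b<1$: you factor $2b^4-7b^2+6=(b^2-2)(2b^2-3)$ and check the signs of the two factors, whereas the paper simply observes that $(b^2-2)^2 > 1$ already holds (since $b^2-2 < -1$), which makes the term $\tfrac{1}{2}b^2$ superfluous; both are valid.
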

\begin{proof}
The inclusion $H(\Bb)\subset H(\Qq)$ is obvious. Observe that $H_1$ and $H_2$ are strictly positive on $\Qq$. Consequently, $H(\Qq)\subset\Qq$.

Next, we prove $\Qq\subset H(\Bb)$ and consequently we will have $\Qq\subset H(\Bb)\subset H(\Qq)\subset \Qq$, so $H(\Bb)=H(\Qq)=\Qq$. 

For each $y>0$ consider the polynomial in the variable $\x$
$$
\psi_y(\x):=\x(\x y-2)^2+\tfrac{1}{2}\x y^2=y^2\x^3-4y\x^2+(4+\tfrac{1}{2}y^2)\x.
$$
Notice that the set $\Bb$ can be expressed as
$$
\Bb=\bigsqcup_{y>0}(\Bb_y\times\{y\})\quad \text{ where }\Bb_y:=\begin{cases}
{]0,+\infty[}& \text{if $y\ge 1$},\\
{]0,y]}\cup{[1/y,+\infty[}&\text{if $0<y<1$}.
\end{cases}
$$
As $\psi_y(\x)$ has odd degree and positive leading coefficient, we have $\lim_{x\to\infty}\psi_y(x)=+\infty$. Moreover, it holds
$$
\psi_y(0)=0,\quad \psi_y(y)=y(y^2-2)^2+\tfrac{1}{2}y^3\quad\text{and}\quad\psi_y(\tfrac{2}{y})=y.
$$
For $0<y<1$ we have 
$$
\psi_y(y)=y(y^2-2)^2+\tfrac{1}{2}y^3=y((y^2-2)^2+\tfrac{1}{2}y^2)>y
$$ 
because $(y^2-2)^2+\frac{1}{2}y^2>1$ if $0<y<1$. As $\psi_y$ is strictly positive on ${]0,+\infty[}$, we deduce
$$
\psi_y(\Bb_y)=\begin{cases}
\psi({]0,+\infty[})={]0,+\infty[}&\text{ if $y\ge 1$}\\
\psi_y({]0,y]}\cup{[1/y,+\infty[})\supset{]0,\psi_y(y)]}\cup{[\psi_y(2/y),+\infty[}={]0,+\infty[}&\text{ if $0<y<1$}.
\end{cases}
$$
Consequently,
$$
\Qq=\bigsqcup_{y>0}({]0,+\infty[}\times\{y\})\subset\bigsqcup_{y>0}(\psi_y(\Bb_y)\times\{y\})=\bigsqcup_{y>0}H(\Bb_y\times\{y\})=H(\Bb),
$$
as required.
\end{proof}

Finally, Theorem \ref{quad} follows straightforwardly from the previous three Lemmas.

\begin{proof}[Proof of Theorem \em\ref{quad}]
Applying Lemmas \ref{step1}, \ref{step2} and \ref{step3} we deduce that 
$$
\Qq=H(\Bb)\subset(H\circ G)(\Aa)\subset(H\circ G\circ F)(\R^2)\subset(H\circ G)(\Qq)
\subset H(\Qq)=\Qq,
$$
that is, $(H\circ G\circ F)(\R^2)=\Qq$, as required.
\end{proof}

\section{Effectiveness of the new map}\label{sec:effect}

The problem of the open quadrant, together with its already known positive constructive answers, invites to search for alternative polynomial maps that also solve the problem and are optimal with respect to their algebraic complexity. This algebraic complexity can be understood in several ways. We briefly describe two possible approaches to this question.

\noindent 
(A) \em Optimal algebraic structure of the polynomial map\em. On a first look it is natural to wonder how our new obtained polynomial map looks like when completely expanded and how it compares to the previous known example in \cite{fg1}. We care about the total degree of the involved polynomial map (the sum of the degrees of its components) and its total number of (non-zero) monomials. We would like to find a polynomial map with the least possible total degree and the least possible number of monomials.

In {\sc Table}~\ref{tab:old} appear the components of the polynomial map $g(\x,\y):=(g_1(\x,\y),g_2(\x,\y))$ proposed in \cite{fg1}, while {\sc Table}~\ref{tab:new} shows those of our new map $f(\x,\y):=(f_1(\x,\y),f_2(\x,\y))$. Observe that the total degree of $g$ is $56$ while the total degree of $f$ is $72$. In addition the total number of monomials of $g$ is $168$ while the total number of monomials of $f$ is $350$. We wonder:

\begin{quest}\em
\em (1) \em Which is the minimum total degree for the set of polynomial maps $\R^2\to\R^2$ whose image is the open quadrant?

\em (2) \em Which is the sparsest polynomial map $\R^2\to\R^2$ whose image is the open quadrant?
\end{quest}
\vskip 0.2cm
{\captionof{table}{The old polynomial map}\vskip -0.4cm
\label{tab:old}
\begin{longtable}{p{14cm}}
{\tiny $g_1(\x,\y):=
\big(\x^{18}+2\x^{16}+\x^{14}\big)\y^{10}+\big(-14\x^{17}-30\x^{15}+4\x^{14}-18\x^{13}+6\x^{12}-2\x^{11}+2\x^{10}\big)\y^{9}+\big(87\x^{16}+202\x^{14}-44\x^{13}+143\x^{12}-72\x^{11}+34\x^{10}-30\x^{9}+7\x^{8}-2\x^{7}+\x^{6}\big)\y^{8}+\big(-316\x^{15}-804\x^{13}+208\x^{12}-662\x^{11}+378\x^{10}-226\x^{9}+192\x^{8}-66\x^{7}+26\x^{6}-12\x^{5}+2\x^{4}\big)\y^{7}+\big(743\x^{14}+2094\x^{12}-552\x^{11}+1985\x^{10}-1134\x^{9}+828\x^{8}-688\x^{7}+269\x^{6}-128\x^{5}+58\x^{4}-12\x^{3}+\x^{2}\big)\y^{6}+\big(-1182\x^{13}-3726\x^{11}+900\x^{10}-4046\x^{9}+2124\x^{8}-1922\x^{7}+1522\x^{6}-622\x^{5}+340\x^{4}-146\x^{3}+28\x^{2}-2\x\big)\y^{5}+\big(1289\x^{12}+4582\x^{10}-924\x^{9}+5702\x^{8}-2538\x^{7}+3022\x^{6}-2150\x^{5}+906\x^{4}-558\x^{3}+207\x^{2}-30\x+1\big)\y^{4}+\big(-952\x^{11}-3840\x^{9}+584\x^{8}-5504\x^{7}+1884\x^{6}-3286\x^{5}+1910\x^{4}-888\x^{3}+586\x^{2}-162\x+12\big)\y^{3}+\big(456\x^{10}+2096\x^{8}-208\x^{7}+3487\x^{6}-792\x^{5}+2408\x^{4}-978\x^{3}+621\x^{2}-372\x+55\big)\y^{2}+\big(-128\x^{9}-672\x^{7}+32\x^{6}-1308\x^{5}+144\x^{4}-1080\x^{3}+220\x^{2}-308\x+112\big)\y+\big(16\x^{8}+96\x^{6}+220\x^{4}+224\x^{2}+85\big),$}\\[0.2cm]
{\tiny $g_2(\x,\y):=\x^{16}\y^{12}+\big(-14\x^{15}-2\x^{13}+2\x^{12}\big)\y^{11}+\big(89\x^{14}+26\x^{12}-22\x^{11}+\x^{10}-2\x^{9}+\x^{8}\big)\y^{10}+\big(-338\x^{13}-152\x^{11}+108\x^{10}-12\x^{9}+20\x^{8}-8\x^{7}\big)\y^{9}+\big(849\x^{12}+524\x^{10}-308\x^{9}+64\x^{8}-88\x^{7}+28\x^{6}\big)\y^{8}+\big(-1476\x^{11}-1176\x^{9}+558\x^{8}-198\x^{7}+220\x^{6}-54\x^{5}\big)\y^{7}+\big(1808\x^{10}+1792\x^{8}-662\x^{7}+391\x^{6}-340\x^{5}+61\x^{4}\big)\y^{6}+\big(-1562\x^{9}-1878\x^{7}+514\x^{6}-512\x^{5}+332\x^{4}-40\x^{3}\big)\y^{5}+\big(944\x^{8}+1344\x^{6}-258\x^{5}+447\x^{4}-202\x^{3}+15\x^{2}\big)\y^{4}+\big(-398\x^{7}-644\x^{5}+86\x^{4}-254\x^{3}+74\x^{2}-4\x\big)\y^{3}+\big(121\x^{6}+206\x^{4}-22\x^{3}+90\x^{2}-18\x+1\big)\y^{2}+\big(-28\x^{5}-48\x^{3}+4\x^{2}-20\x+4\big)\y+\big(4\x^{4}+8\x^{2}+4\big).$}
\end{longtable}
\addtocounter{table}{-1}}

\vskip 0.2cm
{\captionof{table}{The new polynomial map}\vskip -0.4cm
\label{tab:new}
\begin{longtable}{p{14cm}}
{\tiny $f_1(\x,\y)= \big(4\x^{26}+20\x^{24}+41\x^{22}+44\x^{20}+26\x^{18}+8\x^{16}+\x^{14}\big)\y^{26}+\big(-104\x^{25}-480\x^{23}-902\x^{21}-880\x^{19}-468\x^{17}-128\x^{15}-14\x^{13}\big)\y^{25}+\big(32\x^{26}+1458\x^{24}+5839\x^{22}+9807\x^{20}+8554\x^{18}+4036\x^{16}+967\x^{14}+91\x^{12}\big)\y^{24}+\big(-768\x^{25}-13876\x^{23}-46860\x^{21}-69188\x^{19}-53264\x^{17}-22028\x^{15}-4564\x^{13}-364\x^{11}\big)\y^{23}+\big(113\x^{26}+9382\x^{24}+97367\x^{22}+274164\x^{20}+{{703621}\over{2}}\x^{18}+236532\x^{16}+84820\x^{14}+15018\x^{12}+{{2003}\over{2}}\x^{10}\big)\y^{22}+\big(-2486\x^{25}-75768\x^{23}-525594\x^{21}-1229924\x^{19}-1360121\x^{17}-791240\x^{15}-243544\x^{13}-36436\x^{11}-2007\x^9\big)\y^{21}+\big(231\x^{26}+27209\x^{24}+446529\x^{22}+2240074\x^{20}+{{8710925}\over{2}}\x^{18}+{{8244713}\over{2}}\x^{16}+2057114\x^{14}+538091\x^{12}+{{134535}\over{2}}\x^{10}+{{6051}\over{2}}\x^8\big)\y^{20}+\big(-4620\x^{25}-193928\x^{23}-2018472\x^{21}-7667498\x^{19}-12393340\x^{17}-9976824\x^{15}-4233956\x^{13}-931594\x^{11}-96204\x^9-3492\x^7\big)\y^{19}+\big(301\x^{26}+45304\x^{24}+{{1996081}\over{2}}\x^{22}+7201629\x^{20}+21316526\x^{18}+28643722\x^{16}+{{38977381}\over{2}}\x^{14}+6970385\x^{12}+1276041\x^{10}+107514\x^8+3108\x^6\big)\y^{18}+\big(-5418\x^{25}-285964\x^{23}-3907609\x^{21}-20635184\x^{19}-48472730\x^{17}-54092612\x^{15}-30891281\x^{13}-9219636\x^{11}-1387160\x^9-94004\x^7-2128\x^5\big)\y^{17}+\big(259\x^{26}+47243\x^{24}+{{2580609}\over{2}}\x^{22}+{{23978613}\over{2}}\x^{20}+47980885\x^{18}+90479081\x^{16}+{{167285973}\over{2}}\x^{14}+{{79543913}\over{2}}\x^{12}+9792158\x^{10}+1193474\x^8+63896\x^6+1106\x^4\big)\y^{16}+\big(-4144\x^{25}-262276\x^{23}-4387372\x^{21}-29332620\x^{19}-91032136\x^{17}-138703760\x^{15}-105761608\x^{13}-41463220\x^{11}-8305148\x^9-805172\x^7-33232\x^5-424\x^3\big)\y^{15}+\big(147\x^{26}+31738\x^{24}+1029734\x^{22}+11578342\x^{20}+{{115563913}\over{2}}\x^{18}+141146980\x^{16}+174192331\x^{14}+108825883\x^{12}+{{69479137}\over{2}}\x^{10}+5560611\x^8+418307\x^6+12832\x^4+{{227}\over{2}}\x^2\big)\y^{14}+\big(-2058\x^{25}-152936\x^{23}-3012828\x^{21}-24111820\x^{19}-92027457\x^{17}-178500672\x^{15}-178178038\x^{13}-90313924\x^{11}-23084717\x^9-2880652\x^7-162050\x^5-3480\x^3-19\x\big)\y^{13}+\big(53\x^{26}+13607\x^{24}+514692\x^{22}+6758603\x^{20}+{{79906193}\over{2}}\x^{18}+{{236896489}\over{2}}\x^{16}+183083134\x^{14}+146995602\x^{12}+{{119160879}\over{2}}\x^{10}+{{23809093}\over{2}}\x^8+1115393\x^6+44130\x^4+{{1187}\over{2}}\x^2+{{3}\over{2}}\big)\y^{12}+\big(-636\x^{25}-55808\x^{23}-1273116\x^{21}-11799438\x^{19}-52781280\x^{17}-122608140\x^{15}-150791284\x^{13}-96320302\x^{11}-30530204\x^9-4590824\x^7-302620\x^5-7466\x^3-48\x\big)\y^{11}+\big(11\x^{26}+3544\x^{24}+{{314743}\over{2}}\x^{22}+2376631\x^{20}+16126054\x^{18}+55398078\x^{16}+{{202163613}\over{2}}\x^{14}+98174729\x^{12}+48935769\x^{10}+11683851\x^8+{{2465761}\over{2}}\x^6+49595\x^4+534\x^2\big)\y^{10}+\big(-110\x^{25}-12028\x^{23}-320343\x^{21}-3389132\x^{19}-17228062\x^{17}-45771560\x^{15}-65325089\x^{13}-49297772\x^{11}-18508228\x^9-3090272\x^7-192077\x^5-2508\x^3+26\x\big)\y^9+\big(\x^{26}+499\x^{24}+{{54971}\over{2}}\x^{22}+{{963471}\over{2}}\x^{20}+3698783\x^{18}+14267155\x^{16}+{{58589293}\over{2}}\x^{14}+{{64546337}\over{2}}\x^{12}+18324283\x^{10}+4804470\x^8+{{907373}\over{2}}\x^6+{{7397}\over{2}}\x^4-553\x^2-3\big)\y^8+\big(-8\x^{25}-1344\x^{23}-44228\x^{21}-539160\x^{19}-3067508\x^{17}-9009888\x^{15}-14152800\x^{13}-11695504\x^{11}-4652360\x^9-672008\x^7+7308\x^5+4412\x^3+72\x\big)\y^7+\big(28\x^{24}+2366\x^{22}+50998\x^{20}+446884\x^{18}+1901034\x^{16}+4222671\x^{14}+4948005\x^{12}+{{5751153}\over{2}}\x^{10}+639605\x^8-{{52037}\over{2}}\x^6-16587\x^4-{{1249}\over{2}}\x^2-1\big)\y^6+\big(-56\x^{23}-2828\x^{21}-42092\x^{19}-269596\x^{17}-854940\x^{15}-1404926\x^{13}-1159452\x^{11}-389461\x^9+18128\x^7+30043\x^5+2488\x^3+9\x\big)\y^5+\big(70\x^{22}+2310\x^{20}+24418\x^{18}+114536\x^{16}+265718\x^{14}+306840\x^{12}+{{302491}\over{2}}\x^{10}-{{3703}\over{2}}\x^8-{{49077}\over{2}}\x^6-{{9289}\over{2}}-{{39\x^2}\over{2}}\x^4+{{3}\over{2}}\big)\y^4+\big(-56\x^{21}-1264\x^{19}-9568\x^{17}-32360\x^{15}-52408\x^{13}-37034\x^{11}-2472\x^9+10104\x^7+3718\x^5+16\x^3-20\x\big)\y^3+\big(28\x^{20}+439\x^{18}+2350\x^{16}+{{10971}\over{2}}\x^{14}+5505\x^{12}+1064\x^{10}-2161\x^8-1411\x^6-46\x^4+{{127}\over{2}}\x^2+1\big)\y^2+\big(-8\x^{19}-86\x^{17}-312\x^{15}-451\x^{13}-168\x^{11}+222\x^9+252\x^7+26\x^5-34\x^3-5\x\big)\y+\x^{18}+7\x^{16}+{{31}\over{2}}\x^{14}+{{19}\over{2}}\x^{12}-8\x^{10}-17\x^8-4\x^6+5\x^4+{{3}\over{2}}\x^2+{{3}\over{2}}$},
\\[0.2cm]
{\tiny $f_2(\x,\y)= \big(2\x^{10}+5\x^8+4\x^6+\x^4\big)\y^{10}+\big(-20\x^9-40\x^7-24\x^5-4\x^3\big)\y^9+\big(5\x^{10}+102\x^8+149\x^6+62\x^4+6\x^2\big)\y^8+\big(-40\x^9-312\x^7-316\x^5-84\x^3-4\x\big)\y^7+\big(4\x^{10}+149\x^8+600\x^6+395\x^4+58\x^2+1\big)\y^6+\big(-24\x^9-316\x^7-720\x^5-276\x^3-16\x\big)\y^5+\big(\x^{10}+62\x^8+397\x^6+504\x^4+85\x^2\big)\y^4+\big(-4\x^9-84\x^7-284\x^5-168\x^3\big)\y^3+\big(6\x^8+60\x^6+99\x^4+5\x^2-1\big)\y^2+\big(-4\x^7-20\x^5-8\x^3+6\x\big)\y+\x^6+2\x^4-2\x^2+1 $}
\end{longtable}
\addtocounter{table}{-1}}


\noindent 
(B) \em Optimal (multiplicative) complexity\em. Straight-Line Programs (SLP's) formalize step-by-step computations that do not require branching and can be applied to the evaluation of polynomials (see \cite[Chap.4]{bcs} and \cite{w}). Here we are particularly interested in evaluating the polynomial coordinates of our map in an effective way. As multiplications have a higher cost to compute than additions/subtractions, non-scalar complexity seems a reasonable approach to consider in the first place. In our particular case, expressing our map $f$ as a composition of three simpler maps helps to lower the complexity required to evaluate $f$ at a point. More precisely, if we rewrite~\eqref{maps} as
\begin{align}\label{maps2}
&F:\R^2\to\R^2,\ (x,y)\mapsto((xy-1)^2+x^2,(xy-1)^2+y^2),\nonumber\\
&G:\R^2\to\R^2,\ (x,y)\mapsto(x,y((xy)^2-4xy+4)+x((xy)^2-2xy+1)),\\
&H:\R^2\to\R^2,\ (x,y)\mapsto(xy(x\cdot(xy)-4x+4+\tfrac{1}{2}y),y)\nonumber,
\end{align}
an ocular inspection gives us an upper bound for the non-scalar complexity (working with real coefficients) of $4+4+3=11$\footnote{If we consider coefficients in $\C$, we can lower the non-scalar complexity bound  of the map $F$ by one.}. It is not so clear whether this bound can be achieved with the polynomial map proposed in \cite{fg1}. At this point, we wonder:

\begin{quest}\em 
Which is the minimum non-scalar complexity for the set of polynomial maps $\R^2\to\R^2$ whose image is the open quadrant?
\end{quest}

Of course, we can formulate diverse variants of this question if we consider other measures of complexity. In any case, regardless of the different approaches considered, the authors are convinced that more effective examples can be found for the open quadrant problem, and perhaps even shorter proofs.
\vspace{-5mm}
\section*{}
\noindent{\bf Acknowledgement.} The authors would like to thank Prof. Tom\'as Recio for helpful suggestions to improve and refine the presentation of this article.

\end{document}